\title{Real non-attractive fixed point conjecture}
\newtheorem{definition}{Definition}
\newtheorem{example}{Example}
\newtheorem{thm}{Theorem}
\newtheorem{lemma}{Lemma}
\newtheorem*{remark}{Remark}
\author[M. Vaseem]{Mohd. Vaseem }
\address{Department of Mathematics, University of Delhi,
Delhi--110 007, India}
\email{vasli0551@gmail.com}
\begin{document}
\title[Real non-attractive fixed point conjecture for complex harmonic functions]{Real non-attractive fixed point conjecture for complex harmonic functions}
\begin{abstract}
We prove the real non-attractive fixed point conjecture for complex polynomial and rational harmonic functions. A harmonic function $f=h+\overline{g}$ is polynomial (rational) if both $h$ and $g$ are polynomials (rational functions) of degree at least 2. We show that every such function with a super-attracting fixed point has a $\mathfrak{h}$-fixed point  $\zeta=\mu+\overline{\omega}$ such that the real parts of its multipliers satisfy $\text{Re}(\partial_z h(\mu)) \geq 1$ and $\text{Re}(\partial_z g(\omega)) \geq 1$. For polynomial harmonic functions, this holds even without super-attracting conditions. We provide explicit examples, visualizations, and discuss problem for transcendental harmonic functions.
\end{abstract}
\keywords{complex harmonic function, polynomial harmonic function, rational harmonic function}
\maketitle
\section{Introduction and Preliminaries}
  The real non-attractive fixed point conjecture, posed by Coelho and Kalantari \cite{How many real attractive fixed points can a polynomial have?}, asks whether every polynomial $P(z)$ of degree at least 2 has a fixed point $z_0$ such that the real part of its multiplier is greater than or equal to $1$, i.e.,  with $\text{Re}(P'(z_0)) \geq 1$. This was affirmatively resolved for polynomials and rational functions with super-attracting fixed points in \cite{The real non-attractive fixed point conjecture and beyond}. We extend this result to complex harmonic functions, defined as $f = h + \overline{g}$, where $h$ and $g$ are analytic, and $\overline{g}$ denotes the complex conjugate of $g(z)$. We focus on polynomial harmonic functions (both $h, g$ polynomials of degree $\geq 2$) and rational harmonic functions (both $h, g$ rational of degree $\geq 2$).

Harmonic functions, prevalent in fields like fluid dynamics and geometric function theory \cite{Some questions about complex harmonic functions}, introduce non-holomorphic dynamics via $\mathfrak{h}$-fixed point, distinct from traditional fixed points. Our main results (Theorems \ref{T1} and \ref{T2}) establish that every rational harmonic function with a super-attracting fixed point, and every polynomial harmonic function has a $\mathfrak{h}$-fixed point satisfying the conjecture. Theorem \ref{T3} examines the quadratic family $P(z) = z^2 + c + \overline{z^2 + c}$, identifying conditions for multipliers with real part exactly 1. Examples and visualizations clarify the dynamics, and we propose problem for transcendental harmonic functions.

 \textbf{Setting and Notations:}
\begin{enumerate}
\item[•] If the point at infinity is a fixed point of the rational function $R$ that is  $R(\infty)=\infty$ then the multiplier of this fixed point $\infty$ is defined as $h^\prime(0)$ where the function $h(z)$ is defined as follows: $h(z)=\dfrac{1}{R\left( \frac{1}{z}\right)}$.
\item[•] The local iterative behaviour of any function is controlled by the  multiplier of the function at this fixed point.
\end{enumerate}

If $R$ is a rational function then a fixed point $z_{0}$ of $R$ is nothing but a root of the following function $R(z)-z$. Here the local degree of this root `$m$' is most important. \\
 	The local degree $m$ at this fixed point $z_{0}$ of the function $R(z)-z$ is the natural number $m$ such that the map $R(z)-z$, around the fixed point $\hat{z}$ behaves as the map $z\rightarrow z^{m}$ around the origin. This number $m$ is known as the multiplicity of the fixed point $z_{0}$.
 	 
 	\begin{definition}\label{D1}\cite{The real non-attractive fixed point conjecture and beyond} A fixed $z_{0}$ point of a rational map 
 	$R$ is said to be \textbf{weakly repelling} if its multiplier $\lambda=R^{\prime}(z_{0})$ at $z_{0}$ satisfies $\mid\lambda\mid>1$ or $\lambda=1$.
  \end{definition}

 	\begin{definition}\label{D1}\label{def:mul}\cite{The real non-attractive fixed point conjecture and beyond} Let $R$ be any rational function having a fixed point $\hat{z}$ and Taylor series expansion of the function $R(z)-z$ about the fixed point $\hat{z}$ be,
 	\begin{center}	 $b_{m}(z-\hat{z})^{m}+b_{m+1}(z-\hat{z})^{m+1}+\cdots$ with $b_{m}\neq 0$
 	\end{center}
 	Then the multiplicity of this fixed point $\hat{z}$ of  the rational function $R$ is a unique natural number `$m$'. This fixed point $\hat{z}$ is called  simple if $m=1$. But if $m\neq 1$, it is called a multiple fixed point.
    \end{definition}
 	
 	\begin{remark}
 	    
 	 The multiplicity $m$ of a fixed point $z_{0}$ of a rational function $R$ is nothing but the multiplicity of fixed point $z_{0}$ when it is considered as a root of the function $R(z)-z$.\\
 	The following lemma gives a relation between the multiplier and the multiplicity of a fixed point.
\end{remark}
 
\begin{lemma}\cite{The real non-attractive fixed point conjecture and beyond} The necessary and sufficient condition for a fixed point of a rational function to be multiple is that its multiplier is 1.
 \end{lemma}

\begin{remark}
    
     If a fixed point is attractive or repelling or irrationally indifferent then it is always simple. But if this fixed point is rationally indifferent, then it is simple only in the condition when the multiplier is different from 1.
\end{remark}

\begin{example}
    
$P(z)=iz+z^{2}$, $0$ is fixed point of $P(z)$.

\begin{align*}
P^{\prime}(z)&=i+2z\\
\lambda&=P^{\prime}(0)=i\\
\vert\lambda\vert&=\vert P^{\prime}(0)\vert=1\\
\lambda^{4}&=1
\end{align*}
Here $0$ is simple rationally indifferent fixed points of $P(z)$.\\

\end{example}
To understand the nature of all the fixed points together, firstly it is
necessary to know the total number of fixed points of a rational
function. The following lemma gives the total number of fixed point that a rational function can have.
 ${}$\\ ${}$\\
\begin{lemma}
    
\cite{Iteration of rational functions: Complex analytic dynamical systems} A rational function $R$ having degree $d\geq 1$ has at most $d+1$ fixed points (with counting multiplicity).
\end{lemma}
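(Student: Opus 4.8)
The plan is to reduce the counting of fixed points to counting the roots of a single polynomial and then to treat the point at infinity separately, using the change of coordinates already introduced in the Setting. Write $R = P/Q$ in lowest terms, so that by definition $d = \max(\deg P, \deg Q)$. A finite point $z_0$ is fixed precisely when $R(z_0) = z_0$, equivalently when it is a root of
\[
F(z) := P(z) - z\,Q(z).
\]
At a finite fixed point $z_0$ one has $Q(z_0)\neq 0$ (otherwise $z_0$ would be a pole and $R(z_0)=\infty$), so $R(z) - z = F(z)/Q(z)$ shows that the order of vanishing of $F$ at $z_0$ coincides with the multiplicity of $z_0$ as a fixed point in the sense of Definition \ref{def:mul}. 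By the fundamental theorem of algebra, $F$ therefore contributes exactly $\deg F$ finite fixed points counted with multiplicity, so the first step is to bound $\deg F$.

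For the degree bound I would split into cases comparing $\deg P$ and $\deg Q$. If $\deg P \le \deg Q = d$, then $R(\infty)$ is finite so $\infty$ is not fixed, while $\deg(zQ) = d+1 > \deg P$ forces $\deg F = d+1$, giving exactly $d+1$ finite fixed points. If instead $\deg P = d > \deg Q$, then $\deg(zQ) = 1 + \deg Q \le d$, so $\deg F \le d$, giving at most $d$ finite fixed points; but now $R(\infty) = \infty$, so $\infty$ is itself a fixed point. The remaining task is thus to account for the fixed point at infinity so that the total is still bounded by $d+1$.

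To handle infinity I would pass to the coordinate $w = 1/z$ and study $h(w) = 1/R(1/w)$ near $w = 0$, exactly as in the Setting. In the case $\deg P > \deg Q$ I would verify that $w = 0$ is a fixed point of $h$ and compute its multiplicity, checking that the deficit $d+1 - \deg F \ge 1$ in the finite count is exactly absorbed by the multiplicity of this fixed point at infinity. This upgrades the ``at most $d+1$'' to the sharp count ``exactly $d+1$'' on the Riemann sphere and in particular yields the claimed upper bound.

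The main obstacle I anticipate is the bookkeeping at infinity in the borderline sub-case $\deg P = \deg Q + 1$ with cancelling leading coefficients, where $\deg F$ can drop strictly below $d$; there one must confirm that the multiplicity of the fixed point at $w = 0$ of $h$ rises by the corresponding amount. Everything else — the root count of $F$ via the fundamental theorem of algebra and the identification of root multiplicity with fixed-point multiplicity through Definition \ref{def:mul} — is routine.
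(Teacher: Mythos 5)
The paper does not actually prove this lemma---it is imported verbatim from Beardon's book---so there is no in-paper argument to compare against; judged on its own terms, your proposal is the standard proof and is essentially sound, with two caveats. First, the statement as the paper quotes it silently requires $R\neq\mathrm{id}$: for the identity map $F(z)=P(z)-zQ(z)\equiv 0$, the fundamental theorem of algebra gives you nothing, and indeed every point is fixed, so the bound fails; you should exclude this case explicitly. Second---and this is the only substantive issue---the step you defer to the end is not optional bookkeeping: when $\deg P>\deg Q$, the bound ``at most $d+1$'' is not established until you bound the multiplicity of the fixed point at $\infty$, since a priori that multiplicity could exceed the deficit $d+1-\deg F$. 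Fortunately your claim that the deficit is exactly absorbed is correct and needs no case analysis on cancelling leading coefficients: writing $P^{*}(w)=w^{d}P(1/w)$ and $Q^{*}(w)=w^{d}Q(1/w)$, one has $h(w)-w=\bigl(Q^{*}(w)-wP^{*}(w)\bigr)/P^{*}(w)$ with $P^{*}(0)\neq 0$ (because $\deg P=d$), and $Q^{*}(w)-wP^{*}(w)=-w^{d+1}F(1/w)$, which vanishes at $w=0$ to order exactly $d+1-\deg F$, since the reversed polynomial of $F$ is nonzero at the origin. Hence the multiplicity at infinity is exactly $d+1-\deg F$, and the total is exactly $d+1$, covering your borderline sub-case uniformly.

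Two smaller remarks. For the sharp count ``exactly $d+1$'' in your first case you also need the converse of your observation, namely that every root of $F$ is genuinely a fixed point and not a pole; coprimality of $P$ and $Q$ gives this, since $F(z_{0})=Q(z_{0})=0$ would force $P(z_{0})=0$. And one can avoid the analysis at infinity altogether, which is closer to Beardon's own treatment: if $R\neq\mathrm{id}$, pick $w_{0}$ with $R(w_{0})\neq w_{0}$ and conjugate by a M\"obius map sending $w_{0}$ to $\infty$; fixed-point multiplicities are conjugacy invariants (in the spirit of the paper's lemma on invariance of the residue fixed point index), the conjugated map does not fix $\infty$, hence falls into your first case $\deg P\le\deg Q$, where $\deg F=d+1$ on the nose. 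Your route proves slightly more (the exact location of the deficit at $\infty$), while the conjugation trick buys a shorter argument with no case split.
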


 There is an important quantity associated with a fixed point which is called residue fixed point index. This residue fixed point index correlates the multiplier of a fixed point and hence the nature of the fixed point. 
 
\begin{definition}{(Residue fixed point index )}\cite{Milnor}
Let $R$ be a rational function, the the residue fixed point index of the rational function $R$ at a fixed point $\hat{z}$, designated by $\imath(R,\hat{z})$ and it is defined as,
\begin{align*}
\dfrac{1}{2\pi i}\oint_{\gamma}\dfrac{dz}{z-R(z)}
\end{align*} 
where $\gamma$ is a small simple loop in the positive direction around the fixed point $\hat{z}$ such that it does not enclose any other fixed point of the function $R(z)$.\\
The word  mentioned above ``small simple'' can be any circle centered at the fixed point $\hat{z}$ with radius $0<r<\displaystyle\min_{R(z)=z}\vert z-\hat{z}\vert$.\\
\end{definition} 

\begin{definition}
    
Two rational maps $R$ and $S$ are said to be conjugate ($R \backsim S$) if and only if there is some M\"obius map $g$ with the following  property.
\begin{center}
$S=gRg^{-1}$
\end{center}
\end{definition}

\begin{lemma}\cite{The real non-attractive fixed point conjecture and beyond} If $R$ is a rational function such that the multiplier $\lambda$ of a fixed point is not equal to $1$ then the residue fixed point index of the rational function at the fixed point is given by $\dfrac{1}{1-\lambda}$.
\end{lemma}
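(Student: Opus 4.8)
The plan is to interpret the defining contour integral as a residue computation and evaluate that residue directly. Writing $\phi(z) = z - R(z)$, the integrand in the definition of $\imath(R,\hat{z})$ is exactly $1/\phi(z)$, so the quantity to be evaluated is the residue of $1/\phi$ at $\hat{z}$. Here the hypothesis that the loop $\gamma$ encloses no other fixed point of $R$ guarantees that $\hat{z}$ is the only pole of $1/\phi$ inside $\gamma$, and so the residue theorem identifies the integral with $\operatorname{Res}_{\hat{z}}\bigl(1/\phi\bigr)$.

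Next I would establish that $\hat{z}$ is a simple zero of $\phi$. Since $\hat{z}$ is a fixed point we have $\phi(\hat{z}) = \hat{z} - R(\hat{z}) = 0$, and differentiating gives $\phi'(z) = 1 - R'(z)$, hence $\phi'(\hat{z}) = 1 - \lambda$. The hypothesis $\lambda \neq 1$ forces $\phi'(\hat{z}) \neq 0$, so $\phi$ vanishes to order exactly one at $\hat{z}$; equivalently this is just the earlier lemma characterising multiple fixed points by $\lambda = 1$, which tells us that when $\lambda \neq 1$ the fixed point is simple. Consequently $1/\phi$ has a simple pole at $\hat{z}$, and the standard residue formula for a simple pole arising from a simple zero yields $\operatorname{Res}_{\hat{z}}\bigl(1/\phi\bigr) = 1/\phi'(\hat{z}) = 1/(1-\lambda)$. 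Combining this with the residue-theorem step gives $\imath(R,\hat{z}) = 1/(1-\lambda)$.

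There is no serious obstacle here; the single point requiring care is the verification that the zero of $\phi$ is exactly simple, which is precisely where the hypothesis $\lambda \neq 1$ enters and which prevents $1/\phi$ from acquiring a higher-order pole that would invalidate the elementary residue formula. If one wished to avoid quoting that formula, an equivalent route is to expand $\phi(z) = (1-\lambda)(z-\hat{z}) + O\bigl((z-\hat{z})^2\bigr)$ about $\hat{z}$, factor out $(1-\lambda)(z-\hat{z})$, and integrate the resulting convergent geometric-type expansion term by term along $\gamma$, the only surviving contribution being that of the $1/(z-\hat{z})$ term, which again produces $1/(1-\lambda)$.
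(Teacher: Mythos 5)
Your argument is correct and is the standard proof of this fact (it is essentially Lemma 12.2 in Milnor's book): the hypothesis $\lambda \neq 1$ makes $\hat{z}$ a simple zero of $\phi(z) = z - R(z)$, so $1/\phi$ has a simple pole there with residue $1/\phi'(\hat{z}) = 1/(1-\lambda)$, and the condition that $\gamma$ encloses no other fixed point ensures no other poles of $1/\phi$ contribute (poles of $R$ inside $\gamma$ give zeros, not poles, of $1/\phi$). The paper itself states this lemma with a citation and supplies no proof, so there is nothing to compare against; your proof fills that gap correctly, including the one genuinely delicate point, namely that $\lambda \neq 1$ is exactly what rules out a higher-order pole.
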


\begin{remark}
    
If the multiplier of a fixed point $\hat{z}$ is $1$ i.e., $\lambda=R^{\prime}(\hat{z})=1$, the residue fixed point index $\imath(R,\hat{z})$ is still well defined and also it is finite \cite[Problem 12-a]{Milnor}.\\
\end{remark}

If two rational functions $R$ and $S$ are conjugate. Then the following lemma proves that the residue fixed point index is invariant under conformal conjugacy.

\begin{lemma}
\cite{The real non-attractive fixed point conjecture and beyond}
Let the rational functions $R$ and $S$ be conformally conjugate, then there exits a M\"obius map $g$ such that $S(z)=g(R(g^{-1}))(z)$ for any $z\in\widehat{\mathbb{C}}$. If the rational function $R$ has a fixed point $\hat{z}$ then $g(\hat{z})$ is a fixed point of $S$ and both the functions $R$ and $S$ have same residue fixed point indices at these fixed point, i.e., $\imath(R,\hat{z})=\imath(S,\hat{z})$.
\end{lemma}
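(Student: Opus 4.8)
The plan is to prove invariance of the residue fixed point index under conformal conjugacy by a direct change of variables in the contour integral defining the index. The statement to prove is that if $S=gRg^{-1}$ for a Möbius map $g$, and $\hat{z}$ is a fixed point of $R$, then $g(\hat{z})$ is a fixed point of $S$ with $\imath(R,\hat{z})=\imath(S,g(\hat{z}))$.

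\begin{proof}
First I would verify that $g(\hat{z})$ is indeed a fixed point of $S$. Since $S=gRg^{-1}$ and $R(\hat{z})=\hat{z}$, we compute
\begin{align*}
S(g(\hat{z}))=g(R(g^{-1}(g(\hat{z}))))=g(R(\hat{z}))=g(\hat{z}),
\end{align*}
so $w_0:=g(\hat{z})$ is a fixed point of $S$, as required.

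The heart of the argument is a change of variables in the contour integral. By definition,
\begin{align*}
\imath(S,w_0)=\dfrac{1}{2\pi i}\oint_{\Gamma}\dfrac{dw}{w-S(w)},
\end{align*}
where $\Gamma$ is a small positively oriented loop around $w_0$ enclosing no other fixed point of $S$. I would substitute $w=g(z)$, so that $dw=g'(z)\,dz$, and the loop $\Gamma$ pulls back to a small positively oriented loop $\gamma$ around $\hat{z}=g^{-1}(w_0)$. Here I must check orientation: since $g$ is a Möbius map it is conformal and orientation-preserving (away from its pole), so the pullback $\gamma$ is again positively oriented, and because $g$ is a bijection of $\widehat{\mathbb{C}}$ it sends fixed points of $S$ bijectively to fixed points of $R$, so $\gamma$ encloses only $\hat{z}$. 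Under this substitution the integrand transforms via
\begin{align*}
w-S(w)=g(z)-g(R(g^{-1}(g(z))))=g(z)-g(R(z)),
\end{align*}
so the integral becomes
\begin{align*}
\imath(S,w_0)=\dfrac{1}{2\pi i}\oint_{\gamma}\dfrac{g'(z)\,dz}{g(z)-g(R(z))}.
\end{align*}

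The remaining task is to show the factor $g'(z)$ in the numerator is asymptotically compensated by the difference $g(z)-g(R(z))$ in the denominator, so that the integral reduces to $\imath(R,\hat{z})$. The key observation is that the integrand $\frac{g'(z)}{g(z)-g(R(z))}$ and the integrand $\frac{1}{z-R(z)}$ have the same residue at $\hat{z}$. To see this, I would write $g(z)-g(R(z))=(z-R(z))\,\Phi(z)$ where, by the mean value form / first-order Taylor expansion of $g$,
\begin{align*}
\Phi(z)=\dfrac{g(z)-g(R(z))}{z-R(z)}\longrightarrow g'(\hat{z})\neq 0 \quad\text{as } z\to\hat{z},
\end{align*}
and $\Phi$ is holomorphic and nonvanishing near $\hat{z}$ (using that $g'(\hat{z})\neq 0$ since $g$ is Möbius). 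Hence
\begin{align*}
\dfrac{g'(z)}{g(z)-g(R(z))}=\dfrac{1}{z-R(z)}\cdot\dfrac{g'(z)}{\Phi(z)},
\end{align*}
and the correction factor $g'(z)/\Phi(z)$ tends to $g'(\hat{z})/g'(\hat{z})=1$ as $z\to\hat{z}$. The main technical obstacle I anticipate is justifying rigorously that this correction factor, being holomorphic near $\hat{z}$ with value $1$ at $\hat{z}$, does not alter the residue: I would argue that multiplying the meromorphic integrand $\frac{1}{z-R(z)}$ (which has a simple pole precisely when $\hat{z}$ is simple, or a higher-order structure otherwise) by a holomorphic function equal to $1$ at the pole changes the residue only through the higher-order corrections, so a careful Laurent-coefficient comparison is needed in the multiple fixed point case. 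Taking the contour small enough and applying the residue theorem on both sides then yields $\imath(S,w_0)=\imath(R,\hat{z})$, which completes the proof.
\end{proof}
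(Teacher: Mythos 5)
The paper itself offers no proof of this lemma --- it is quoted from the cited reference --- so your attempt must stand on its own, and it does not quite close. Your verification that $g(\hat{z})$ is fixed by $S$, the substitution $w=g(z)$, and the orientation and enclosure checks are all correct, and the reduction to comparing the residues of $\frac{g'(z)}{g(z)-g(R(z))}$ and $\frac{1}{z-R(z)}$ at $\hat{z}$ is the right move. The gap is the final step. The principle you invoke --- that multiplying $\frac{1}{z-R(z)}$ by a holomorphic factor $u$ with $u(\hat{z})=1$ leaves the residue essentially unchanged --- is false for poles of order $m\geq 2$: compare $\frac{1}{z^{2}}$ (residue $0$) with $\frac{1+z}{z^{2}}$ (residue $1$), where the factor $1+z$ equals $1$ at the pole. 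And the multiple fixed point case is the \emph{entire} content of the lemma: when the multiplier $\lambda\neq 1$ the fixed point is simple, $\imath(R,\hat{z})=\frac{1}{1-\lambda}$, and $S'(g(\hat{z}))=R'(\hat{z})$ by the chain rule, so invariance is immediate without any contour integral. You explicitly defer exactly the hard case ($\lambda=1$, pole of order $m\geq 2$) to an unperformed ``careful Laurent-coefficient comparison,'' so the proof as written establishes nothing beyond what the chain rule already gives.

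The repair is to upgrade $u(\hat{z})=1$ to the statement that $u(z)-1$ vanishes to order at least $m$, and this comes from a two-variable divided-difference argument rather than a Laurent chase. Let $G(z,w)=\frac{g(z)-g(w)}{z-w}$, which extends holomorphically across the diagonal near $(\hat{z},\hat{z})$ with $G(z,z)=g'(z)\neq 0$; your $\Phi(z)$ is $G(z,R(z))$. Then
\begin{align*}
\frac{g'(z)}{g(z)-g(R(z))}-\frac{1}{z-R(z)}
=\frac{G(z,z)-G(z,R(z))}{\left(z-R(z)\right)G(z,R(z))},
\end{align*}
and the numerator is itself a divided difference of $G$ in its second slot, hence carries the factor $z-R(z)$: one has $G(z,z)-G(z,R(z))=\left(z-R(z)\right)K(z)$ with $K$ holomorphic near $\hat{z}$. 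So the difference of the two integrands is holomorphic near $\hat{z}$, and the integrals over $\gamma$ agree for every multiplicity $m$ at once. Equivalently, $u(z)-1=O\left((z-R(z))\right)=O\left((z-\hat{z})^{m}\right)$, which is precisely the strengthening your argument needs. One further small point: the substitution $dw=g'(z)\,dz$ presumes $g$ is finite and holomorphic on the relevant disk, so when $\hat{z}$, $g(\hat{z})$, or the pole of $g$ is involved --- in particular for the fixed point at $\infty$, the case the paper actually uses for polynomials --- the computation must be carried out in a chart, which is harmless since the index at $\infty$ is defined through such a chart anyway.
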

 
The next theorem is known as \textbf{The rational fixed point theorem \cite{The real non-attractive fixed point conjecture and beyond},} it is also known as the Holomorphic fixed point formula.
 
\begin{thm}\cite{The real non-attractive fixed point conjecture and beyond} Let $R$ be any non-identity and non-constant rational map, then the sum of all residue fixed point indices over the extended complex plane $\widehat{\mathbb{C}}$ is equal to one, i.e., $\displaystyle\sum _{z=R(z)}\imath(R,{z})=1$.
\end{thm}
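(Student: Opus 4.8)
The plan is to recognize the residue fixed point index as a genuine residue of a single global meromorphic $1$-form on the Riemann sphere, and then to invoke the fact that the residues of any meromorphic $1$-form on a compact Riemann surface sum to zero. Concretely, set $\omega = \dfrac{dz}{z - R(z)}$. Since $R$ is non-constant and non-identity, $z - R(z)$ is a nonzero rational function, so $\omega$ is a well-defined meromorphic $1$-form on $\widehat{\mathbb{C}}$. By the very definition of the index, for each finite fixed point $\hat{z}$ the quantity $\imath(R,\hat{z})$ is exactly the residue of $\omega$ at $\hat{z}$; moreover at a finite pole of $R$ the function $1/(z-R(z))$ vanishes, so the only finite poles of $\omega$ are the finite fixed points of $R$.

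Before applying the residue theorem I would first arrange that the point at infinity is \emph{not} a fixed point. Because a rational map of degree $d$ has at most $d+1$ fixed points (by an earlier lemma), the fixed-point set $\mathrm{Fix}(R)$ is finite, so I can choose a M\"obius map $g$ with $g^{-1}(\infty)\notin \mathrm{Fix}(R)$ and pass to the conjugate $S = gRg^{-1}$. Conjugacy carries $\mathrm{Fix}(R)$ bijectively onto $\mathrm{Fix}(S)$, keeps $\infty$ off the fixed-point set, and by the conjugacy-invariance lemma preserves each index, hence preserves the total sum. So it suffices to prove the identity for $S$, for which every fixed point is finite.

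With all fixed points finite, the residue theorem on $\widehat{\mathbb{C}}$ yields $\sum_{z_0\in\mathrm{Fix}(S)}\imath(S,z_0) + \operatorname{Res}_{\infty}\omega = 0$, so the whole problem reduces to computing the residue of $\omega = dz/(z-S(z))$ at infinity. Since $\infty$ is not fixed, $S(z)$ tends to a finite value as $z\to\infty$, whence $z - S(z) \sim z$ and $\omega \sim dz/z$ near infinity. Passing to the coordinate $w = 1/z$ turns $dz/z$ into $-dw/w$, which has residue $-1$ at $w=0$, and the lower-order corrections contribute nothing. Thus $\operatorname{Res}_{\infty}\omega = -1$, giving $\sum_{z_0}\imath(S,z_0)=1$, which transfers back to $R$.

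The main obstacle I anticipate is the bookkeeping at infinity: one must justify that the residues of a meromorphic $1$-form on the sphere sum to zero (equivalently, that for a rational function the total residue, including the point at infinity, vanishes), and verify rigorously that no fixed point is lost or miscounted under the M\"obius reduction and that the index remains the residue in the transformed coordinate. The asymptotic estimate $z - S(z)\sim z$ and its conversion under $w=1/z$ is the only genuine computation, and it must be carried out with enough control on the error terms that only the $-1$ survives.
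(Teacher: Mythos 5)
You should note first that the paper itself offers no proof of this statement: it is quoted verbatim from the cited reference (Kumar--Nayak, ultimately Milnor's holomorphic fixed point formula), so there is no in-paper argument to compare against. Your proposal is correct and is essentially the standard proof of this theorem: interpreting $\imath(R,\hat z)$ as the residue of the $1$-form $\omega = dz/(z-R(z))$ (which matches the paper's contour-integral definition, and handles multiple fixed points with $\lambda=1$ uniformly), conjugating by a M\"obius map so that $\infty$ is not fixed --- legitimate via the paper's conjugacy-invariance lemma and the finiteness of $\mathrm{Fix}(R)$ --- and then computing $\operatorname{Res}_\infty \omega = -1$. That last computation is sound as you set it up: with $S(\infty)$ finite, in the coordinate $w=1/z$ one gets $\omega = -\,dw/\bigl(w(1-wS(1/w))\bigr)$ with $1-wS(1/w)$ holomorphic and nonvanishing at $w=0$, so the residue is exactly $-1$ and your ``lower-order corrections contribute nothing'' claim is rigorous; your observation that $\omega$ is holomorphic at the poles of $S$ (where $1/(z-S(z))$ vanishes) correctly rules out stray residues, and the global residue theorem on $\widehat{\mathbb{C}}$ that you invoke is elementary there.
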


\begin{remark}\cite{The real non-attractive fixed point conjecture and beyond}
    
Since for each polynomial the point at infinity is super attracting fixed point and therefore it is simple fixed point, then residue fixed point index of each polynomial $P$, with degree at least $2$ at the point $\infty$ is $1$, therefore we have 
\begin{align*}
\displaystyle\sum _{z=R(z);z\in\mathbb{C}}\imath(P,{z})&=0.
\end{align*}
We already have a classification of fixed points with respect to the multipliers, next lemma also classifies the nature of the fixed points on the basis of residue fixed point index.
\end{remark}

 ${}$\\ ${}$\\

The following theorems resolves the real non attractive fixed point conjecture in it's full generality.

\begin{thm}{(The real non-attractive fixed point conjecture)}\cite{The real non-attractive fixed point conjecture and beyond}\label{T1}
Every rational function $R$ with degree at least two and with a super attracting fixed point has a fixed point such that the real part of its multiplier is greater than or equal to $1$. In particular, this is true for all polynomials with degree greater than or equal to two.
\end{thm}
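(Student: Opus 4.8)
The plan is to argue by contradiction using the rational fixed point theorem $\sum_{z=R(z)}\imath(R,z)=1$ together with the explicit formula $\imath(R,\hat z)=\tfrac{1}{1-\lambda}$ for the residue index at a fixed point whose multiplier $\lambda$ differs from $1$. Suppose, toward a contradiction, that \emph{every} fixed point $z_0$ of $R$ satisfies $\text{Re}(R'(z_0))<1$. My first step would be to note that this hypothesis immediately excludes any fixed point with multiplier exactly $1$, since such a point would have $\text{Re}(\lambda)=1$; hence, by the lemma relating multiplicity and multiplier, all fixed points are simple and each residue index is given by $\tfrac{1}{1-\lambda}$.

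The heart of the matter is the elementary observation that $\text{Re}(\lambda)<1$ pins the index into the open right half-plane. Writing $\lambda=a+bi$ with $a<1$, one computes
\begin{align*}
\text{Re}\!\left(\frac{1}{1-\lambda}\right)=\frac{1-a}{(1-a)^2+b^2}>0,
\end{align*}
the positivity coming solely from $1-a>0$. Thus, under the contradiction hypothesis, the residue fixed point index of \emph{every} fixed point has strictly positive real part.

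Next I would bring in the super-attracting fixed point. Its multiplier is $0$, so its index equals $\tfrac{1}{1-0}=1$, contributing exactly $1$ to the real part of the index sum. Because the super-attracting point is simple (its multiplier $0\neq1$), it absorbs only one of the $d+1$ fixed points counted with multiplicity, so for degree $d\geq 2$ there must be at least one further fixed point, and by the previous paragraph it contributes a strictly positive real part. Taking real parts in $\sum_{z=R(z)}\imath(R,z)=1$ then gives $1=1+(\text{strictly positive})>1$, the required contradiction. For polynomials of degree $\geq 2$ the identical mechanism runs with $\infty$ in the role of the super-attracting point: its index is $1$, so the finite indices sum to $0$, while the fundamental theorem of algebra guarantees a finite fixed point whose index would have positive real part, again forcing $0>0$.

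The step I expect to be the main obstacle is not the core inequality, which is routine, but the bookkeeping around degenerate configurations: one must confirm that the super-attracting fixed point can neither coincide with nor exhaust all the remaining fixed points, and that any multiple fixed point (necessarily carrying $\lambda=1$) is disposed of at the very start. Once these cases are separated cleanly, the positivity of $\text{Re}\!\left(\tfrac{1}{1-\lambda}\right)$ together with the holomorphic fixed point formula closes the argument at once.
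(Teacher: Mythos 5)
Your proposal is correct and takes essentially the same route as the paper: both arguments rest on the holomorphic fixed point formula, the index formula $\frac{1}{1-\lambda}$ for simple fixed points, and the computation $\mathrm{Re}\left(\frac{1}{1-\lambda}\right)=\frac{1-\mathrm{Re}(\lambda)}{\vert 1-\lambda\vert^{2}}$, with the super-attracting fixed point (multiplier $0$) contributing exactly $1$ to the index sum. The only cosmetic difference is that you frame it as a contradiction (which automatically disposes of multiplier-$1$ points at the outset), whereas the paper first treats the multiple fixed point case directly and then deduces from $\sum_{i=1}^{d}\frac{1-\mathrm{Re}(\lambda_i)}{\vert 1-\lambda_i\vert^{2}}=0$ that not every $\mathrm{Re}(\lambda_i)$ can be below $1$.
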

Now we are going to prove this conjecture for complex harmonic functions. A complex-valued function in a planar domain is said to be harmonic if its real and imaginary parts are harmonic functions, not necessarily harmonic conjugate. \\\\
A continuous function $f = u + iv$ defined in a domain $D\subset \mathbb{C}$ is harmonic in $D$ if u and v are real harmonic functions in $D$ which are not necessarily conjugate. If $D$ is simply connected domain we can write $f={h}+\overline{g}$, where $h$ and $g$ are analytic functions on $D$ and $g$ denote the function $z\rightarrow\overline{g(z)}$.

\begin{definition}[$\mathfrak{h}$-Fixed Points]\cite{Some questions about complex harmonic functions}
The complex $\zeta$ is said to be a finite $\mathfrak{h}$-fixed point for the complex harmonic function $f=h+\overline{g}$ if $\zeta=\mu+\overline{\omega}$ and satisfies the equation $\mu+\overline{\omega}=h(\mu)+\overline{g(\omega)}$.

\end{definition}
Suppose that $f=h+\overline{g}$ such that $h(\mu)=\mu$ and $g(\omega)=\omega$, then $\mu+\overline{\omega}=h(\mu)+\overline{g(\omega)}$. Thus $\zeta=\mu+\overline{\omega}$ is a $\mathfrak{h}$- fixed point of $f$. These types of points will be called induced $\mathfrak{h}$-fixed points and they constitute probably isolated $\mathfrak{h}$-fixed points. In particular, if $g(0)=0$ then all the usual fixed points of $h$ are $\mathfrak{h}$ fixed points
 of $f=h+\overline{g}$. In the same way, if $h(0)=0$ then each fixed point of $g$ is $\mathfrak{h}$-fixed point of $f=h+\overline{g}$. It shows that 0 is a $\mathfrak{h}$-fixed point of  $f=h+\overline{g}$ whenever $h(0)=g(0)=0$.

${}$\\ ${}$\\

 In the case $D={\mathbb{C}}$ is reasonable to include $\zeta=\infty$ in the analysis of the $\mathfrak{h}$-fixed points of a complex harmonic functions. We say that
\begin{enumerate}
    \item[1.] $\mu+\overline{\infty}$ is an infinite $\mathfrak{h}$-fixed point of $f=h+\overline{g}$ if $h(\mu)=\mu$ and $g(\infty)=\infty$,
    \item[2.] $\infty+\overline{\omega}$ is an infinite $\mathfrak{h}$-fixed point of $f=h+\overline{g}$ if $h(\infty)=\infty$ and $g(\omega)=\omega$,
    \item[3.] $\infty+\overline{\infty}$ is an infinite $\mathfrak{h}$-fixed point of $f=h+\overline{g}$ if $h(\infty)=\infty$ and $g(\infty)=\infty$,
\end{enumerate}

\begin{remark}\cite{Some questions about complex harmonic functions}
    
 Let $\mu+\overline{\omega}$ be a $\mathfrak{h}$-fixed point of $f=h+\overline{g}$ then $\lambda=\partial_{z}h(\mu)=h'(\mu)=$ and $\theta=\partial_{z}g(\omega)=g'(\omega)$ are called the multipliers of $f$.

\end{remark}
\begin{example}
For $f(z) = z^2 + \overline{z^2}$, if $\mu = 1$, $\omega = 1$, then $h(1) = 1$, $\overline{g(1)} = 1$, so $\zeta = 1 + 1 = 2$ is an $\mathfrak{h}$-fixed point.
\end{example}
\begin{figure}[h]
    \centering
    \includegraphics[width=0.5\textwidth]{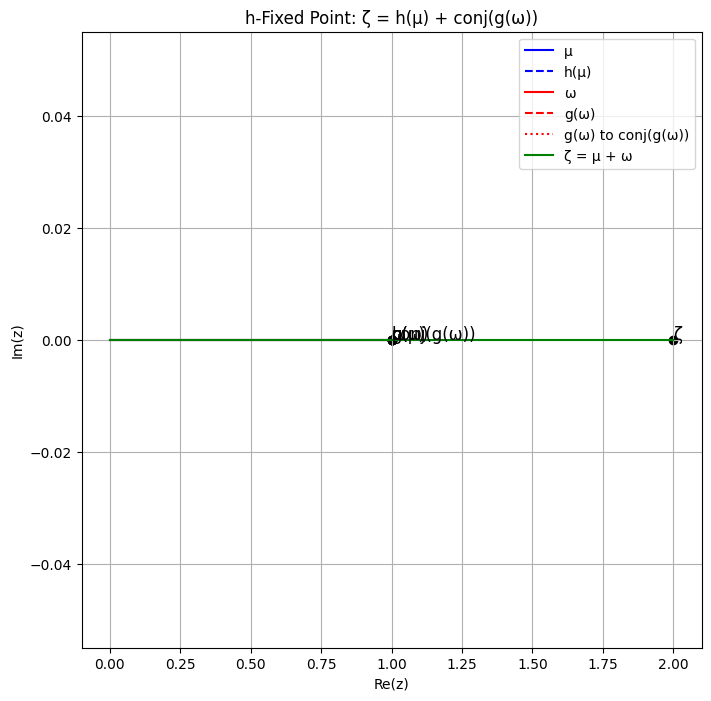}\label{f11}
    
 \caption{The $\mathfrak{h}$-fixed point $\mu+\overline{\omega}$ satisfies $\mu+\overline{\omega}=h(\mu)+\overline{g(\omega)}$ for $f(z) = z^2 + \overline{z^2}$. Arrows depict the mappings $\mu \to h(\mu)$ and $\omega \to g(\omega) \to \overline{g(\omega)}$.}
 \end{figure}
  Local dynamics near a $\mathfrak{h}$-fixed point $\zeta=\mu+\overline{\omega}$ for the function $f(z)=z^2 +\overline{z^{2}}$ with $\mu=1$ and $\omega=1$, $\lambda=h^{\prime}(1)=2$ and $\theta=g^{\prime}(1)=2$(repelling). Left:$h(z)$ near $\mu$ Right:$g(z)$ near $\omega$. Red arrows indicate repelling behaviour $\left( \vert\lambda\vert, \vert\theta\vert>1\right) 
$
\begin{figure}[h]
\begin{center}
\includegraphics[keepaspectratio=true,scale=0.2]{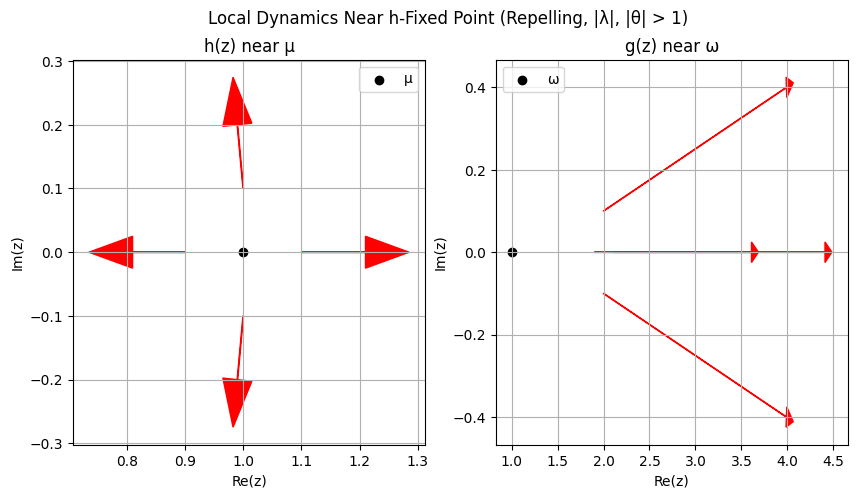}
\caption{Local dynamics near a $\mathfrak{h}$-fixed point $\zeta=\mu+\overline{\omega}$ for the function $f(z)=z^2 +\overline{z^{2}}$ }
\end{center}
\end{figure}

\begin{remark}\cite{Dynamics of Complex Harmonic Mappings}
A harmonic mapping $f=h+\overline{g}$ is said to be a polynomial (rational) harmonic mapping if both $h$ and $g$ are polynomials (rationals) of degrees atleast 2, and a transcendental harmonic mapping if at least one of the $h$ and $g$ is a transcendental function.
    
\end{remark}

We already have  real non attractive fixed point conjecture for polynomials and rational functions \cite{The real non-attractive fixed point conjecture and beyond}. Now we are going to prove this conjecture for polynomial harmonic functions and rational harmonic functions.

\begin{thm}\label{T1}

Every rational harmonic function $f=h+\overline{g}$ where both the rational functions $h$ and $g$ are of degree greater than or equal to two and with a super-attracting fixed point, has a $\mathfrak{h}$-fixed point  $\zeta=\mu+\overline{\omega}$ such that the real part of its multipliers is greater than or equal to $1$, i.e., $Re\left( \lambda=\partial_{z}h(\mu)\right)\geq 1$ and $Re\left( \theta=\partial_{z}g(\mu)\right)\geq 1$. In particular, this is true for all polynomial harmonic functions with degree greater than or equal to two.
    
\end{thm}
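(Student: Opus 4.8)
The plan is to reduce the harmonic statement to two independent applications of the already-established holomorphic real non-attractive fixed point conjecture. The key observation is structural: among the $\mathfrak{h}$-fixed points of $f=h+\overline{g}$ we single out the \emph{induced} ones, namely points $\zeta=\mu+\overline{\omega}$ for which $\mu$ is an ordinary fixed point of $h$ and $\omega$ is an ordinary fixed point of $g$. For such a point the defining relation $\mu+\overline{\omega}=h(\mu)+\overline{g(\omega)}$ holds automatically, and, by the remark defining the multipliers, the two multipliers of $f$ at $\zeta$ are exactly $\lambda=\partial_{z}h(\mu)=h'(\mu)$ and $\theta=\partial_{z}g(\omega)=g'(\omega)$. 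Thus the harmonic multiplier condition $\mathrm{Re}(\lambda)\ge 1$ and $\mathrm{Re}(\theta)\ge 1$ decouples into two separate conditions on the ordinary holomorphic fixed points of $h$ and of $g$.

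First I would apply the rational real non-attractive fixed point conjecture (the earlier Theorem in this excerpt) to $h$: since $h$ is a rational function of degree at least two possessing a super-attracting fixed point, there exists a fixed point $\mu$ of $h$ with $\mathrm{Re}(h'(\mu))\ge 1$. Running the identical argument for $g$, which is also rational of degree at least two with a super-attracting fixed point, produces a fixed point $\omega$ of $g$ with $\mathrm{Re}(g'(\omega))\ge 1$. Combining these, I set $\zeta=\mu+\overline{\omega}$; by the previous paragraph this is an induced $\mathfrak{h}$-fixed point of $f$, and its multipliers satisfy $\mathrm{Re}(\lambda)=\mathrm{Re}(h'(\mu))\ge 1$ and $\mathrm{Re}(\theta)=\mathrm{Re}(g'(\omega))\ge 1$, which is precisely the claim.

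For the polynomial case I would invoke the ``in particular'' clause of the earlier theorem: every polynomial of degree at least two has the point at infinity as a super-attracting fixed point, so the hypothesis is met free of charge, and no super-attracting assumption beyond $\deg\ge 2$ is needed for either $h$ or $g$. Selecting fixed points $\mu$ of $h$ and $\omega$ of $g$ whose multipliers have real part at least one then again yields the induced $\mathfrak{h}$-fixed point $\zeta=\mu+\overline{\omega}$ with the required bounds.

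The main obstacle I anticipate is not the logical skeleton, which is a clean decoupling, but the bookkeeping around degenerate locations of the chosen fixed points. Specifically, the earlier theorem may deliver a fixed point of $h$ (or $g$) at $\infty$, where the multiplier is defined through the conjugated map $z\mapsto 1/R(1/z)$ rather than by a literal derivative; I would need to verify that such a choice still fits the $\mathfrak{h}$-fixed point framework, using the infinite $\mathfrak{h}$-fixed point conventions listed above, or else argue that a finite fixed point meeting the multiplier bound can always be selected. A secondary point to pin down is the precise reading of ``with a super-attracting fixed point'' in the harmonic setting—namely whether it is imposed on $h$ and $g$ individually, which is exactly what the reduction consumes—so that the hypotheses of the holomorphic theorem are genuinely available for both factors.
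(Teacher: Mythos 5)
Your proposal is correct and follows essentially the same route as the paper: the paper likewise decouples the problem into the two holomorphic factors, obtains a fixed point $\mu$ of $h$ and $\omega$ of $g$ with $\mathrm{Re}(h'(\mu))\ge 1$ and $\mathrm{Re}(g'(\omega))\ge 1$ (the only difference being that the paper re-runs the residue fixed point index argument inline rather than citing the earlier holomorphic theorem as a black box, as you do), and then forms the induced $\mathfrak{h}$-fixed point $\zeta=\mu+\overline{\omega}$. The possible degeneracy you flag---the chosen fixed point landing at $\infty$, where the multiplier is defined via conjugation---is left untreated in the paper as well, so your attempt is, if anything, slightly more careful on that point.
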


\begin{proof}

Let $f=h+\overline{g}$ be rational harmonic function, where both the rational functions, $h$ and $g$ are of degree greater than or equal to two and with a super-attracting fixed point.

Now firstly we are trying to show for the rational function $h$ that there exist a fixed point whose real part is greater than equal to one. If a rational function $h$ has a multiple fixed point then in this case it's multiplier is $1$ and real part of its multiplier is one.\\

Now consider the case when the rational function $h$ has all of its fixed points simple (Multipliers different from one). If rational function $h$ is of degree $d$ then it has $d+1$ distinct fixed points. Suppose that the multiplier $\lambda_{i}=\partial_{z}h(\mu_{i})$ for $i=1,2,3,\ldots,d+1$, where $h(\mu_{i})=\mu_{i}$ for $i=1,2,3,\ldots, d+1.$ 

Now by  using the rational fixed point theorem we have, $\displaystyle\sum_{i=1}^{d+1}\frac{1}{1-\lambda_{i}}=1$. However, if the rational function $h$ has a super attracting fixed point, say $\mu_{d+1}$, for this point its multiplier is $\lambda_{d+1}=0$. Therefore,
\begin{equation}\label{E3}
\displaystyle\sum_{i=1}^{d}\frac{1}{1-\lambda_{i}}=0
\end{equation}
Comparing the real part, we get that
\begin{align*}
Re\left( \frac{1}{1-\lambda_{i}}\right)&=\frac{1}{2}\left[ \frac{1}{1-\lambda_{i}}+\frac{1}{1-\bar{\lambda_{i}}}\right]\quad\quad\quad(\lambda_{i}=x+iy) \\
&=\frac{1-\bar{\lambda_{i}}+1-\lambda_{i}}{2(1-\lambda_{i})\overline{(1-\lambda_{i})}}\\
&=\frac{1-x+iy+1-x-iy}{2{\vert 1-\lambda_{i}\vert}^{2}}\\
&=\frac{1-x}{{\vert 1-\lambda_{i}\vert}^{2}}\\
&=\frac{Re(1-\bar{\lambda_{i}})}{{\vert 1-\lambda_{i}\vert}^{2}}
\end{align*}
Hence by equation \ref{E3}

\begin{equation}\label{E4}
\displaystyle\sum_{i=1}^{d}\frac{Re(1-\bar{\lambda_{i}})}{\mid 1-\lambda_{i}
\mid^{2}}=0 
\end{equation}
Since $Re(1-\bar{\lambda_{i}})=1-Re(\lambda_{i})$ and $\mid 1-\lambda_{i}\mid^{2}>0$ for each $i$, it is not possible to have $Re(\lambda_{i})<1$ for every $i$. Therefore, there exists $j\in\lbrace1,2,\cdots,d\rbrace$ such that $Re\left( \lambda_{i}=\partial_{z}h(\mu)\right)\geq 1$.

Therefore every rational function $h$ having degree at least two and with a super attracting fixed point has a fixed point, the real part of whose multiplier is greater than or equal to $1$.

Similarly we can show the existence of a fixed of rational function $g$ whose real part of multiplier is greater than or equal to one, $Re\left( \theta_{i}=\partial_{z}g(\mu_{i})\right)\geq 1$.

 Therefore, both the rational functions $h$ and $g$ have fixed points, say $\mu$ and $\omega$ such that $h(\mu)=\mu$ and $g(\omega)=\omega$. Then the real part of the multipliers  $Re\left( \lambda=\partial_{z}h(\mu)\right)\geq 1$ and $Re\left( \theta=\partial_{z}g(\mu)\right)\geq 1$. Also as $\mu+\overline{\omega}=h(\mu)+\overline{g(\omega)}$. Thus $\mu+\overline{\omega}$ is a $\mathfrak{h}$- fixed point of $f$. Therefore,   Every rational harmonic function $f=h+\overline{g}$, where both the rational functions $h$ and $g$ are of degree greater than or equal to two and with a super-attracting fixed point, has a $\mathfrak{h}$-fixed point  $\zeta=\mu+\overline{\omega}$ such that the real part of its multipliers is greater than or equal to $1$.
\end{proof}

\begin{remark}
\begin{enumerate}

\item[1.] If the real part of each multipliers is greater than $1$ i.e., $Re\left( \lambda=\partial_{z}h(\mu)\right)>1$ and $Re\left( \theta=\partial_{z}h(\mu)\right)>1$. then each term in the left hand side of Equation \ref{E4} is negative, which gives a contradiction. Therefore, every rational harmonic function $f$ with a super-attracting fixed point or every polynomial harmonic function having degree at least two has a $\mathfrak{h}$-fixed point such that the real part of its multipliers is less than or equal to $1$.\\
Furthermore, if all the $\mathfrak{h}$ fixed points of a rational harmonic function are simple and some fixed points have real part of their multipliers greater than $1$ then there is a $\mathfrak{h}$ fixed point having real part of its multipliers less than $1$.

\item[2.] Equating the imaginary part in Equation \ref{E3} for each rational function, we have,
\begin{center}
$\displaystyle\sum_{i=1}^{d}\frac{Im(\lambda_{i})}{\mid 1-\lambda_{i}\mid^{2}}=0$
\end{center}
 where $\lambda_{i}=\partial_{z}h(\mu_{i})$.\\
 
 From this equation, we observe that the rational function $h$ has a fixed point with multiplier 1 or has a fixed point with imaginary part at least $0$.\\
In other words, if all the fixed points of a rational function are simple then it has a fixed point such that the imaginary part of its multiplier is non-negative.
Hence it shows that if all the fixed point of a rational function are simple then it has a fixed point such that the imaginary part of its multiplier is non-negative.
\item[3.] The real non attractive fixed point conjecture is true for all rational harmonic functions $f$ with a super attracting fixed point.
However, we cannot say anything in general for a rational map without any super attracting fixed point.
\end{enumerate}
This can be seen through the following examples.
\end{remark}

\begin{thm}\label{T2}
    Every polynomial harmonic function $f=h+\overline{g}$, where both the polynomials $h$ and $g$ are of degree greater than or equal to two, has a $\mathfrak{h}$-fixed point, $\zeta=\mu+\overline{\omega}$ such that the real part of its multipliers is greater than or equal to 1, i.e., $Re\left( \lambda=\partial_{z}h(\mu)\right)\geq 1$ and $Re\left( \theta=\partial_{z}g(\mu)\right)\geq 1$.
\end{thm}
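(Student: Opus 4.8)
The plan is to reduce Theorem~\ref{T2} to the residue-counting argument already carried out in the proof of Theorem~\ref{T1}, using the fact that for a polynomial of degree at least two the super-attracting hypothesis comes for free. Viewing the polynomial $h$ as a rational self-map of $\widehat{\mathbb{C}}$, the point at infinity is a super-attracting fixed point whose residue fixed point index equals $1$ (as recorded in the Remark preceding Theorem~\ref{T1}). Hence the rational fixed point theorem, which asserts $\sum_{z=h(z)}\imath(h,z)=1$, collapses to a statement about the finite fixed points alone,
\begin{equation*}
\sum_{z=h(z),\, z\in\mathbb{C}}\imath(h,z)=0.
\end{equation*}

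First I would count the finite fixed points: since $h$ has degree $d\geq 2$, the polynomial $h(z)-z$ has degree $d$, so $h$ has exactly $d$ finite fixed points counted with multiplicity, say $\mu_{1},\dots,\mu_{d}$ with multipliers $\lambda_{i}=\partial_{z}h(\mu_{i})$. If any of these is a multiple fixed point, its multiplier is $1$ by the lemma relating multiplicity and multiplier, so $Re(\lambda_{i})=1\geq 1$ and that fixed point already works. Otherwise every finite fixed point is simple, so $\imath(h,\mu_{i})=\tfrac{1}{1-\lambda_{i}}$ and the displayed identity becomes $\sum_{i=1}^{d}\tfrac{1}{1-\lambda_{i}}=0$. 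Taking real parts exactly as in the proof of Theorem~\ref{T1} yields
\begin{equation*}
\sum_{i=1}^{d}\frac{1-Re(\lambda_{i})}{\vert 1-\lambda_{i}\vert^{2}}=0,
\end{equation*}
and since every denominator is strictly positive it is impossible for $Re(\lambda_{i})<1$ to hold for all $i$; thus some fixed point $\mu$ of $h$ satisfies $Re\left(\partial_{z}h(\mu)\right)\geq 1$.

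Next I would run the identical argument for the polynomial $g$, obtaining a fixed point $\omega$ with $g(\omega)=\omega$ and $Re\left(\partial_{z}g(\omega)\right)\geq 1$. Finally I would assemble the $\mathfrak{h}$-fixed point: since $h(\mu)=\mu$ and $g(\omega)=\omega$ we have $\mu+\overline{\omega}=h(\mu)+\overline{g(\omega)}$, so $\zeta=\mu+\overline{\omega}$ is an induced $\mathfrak{h}$-fixed point of $f$, and by construction both multiplier conditions $Re\left(\partial_{z}h(\mu)\right)\geq 1$ and $Re\left(\partial_{z}g(\omega)\right)\geq 1$ hold.

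I do not expect a serious obstacle, since the whole point is that a polynomial carries its super-attracting fixed point at infinity automatically; the only thing to watch is the bookkeeping at infinity---confirming that the index there is $1$ and that exactly $d$ (not $d+1$) fixed points remain in $\mathbb{C}$---so that the residue sum over the finite fixed points is $0$ rather than $1$. This is precisely what distinguishes the argument from the generic rational case, where the super-attracting point is one of the finite fixed points counted in the sum.
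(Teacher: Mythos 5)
Your proposal is correct and follows essentially the same route as the paper: the paper's proof of Theorem~\ref{T2} simply observes that $\infty$ is an automatic super-attracting fixed point of any polynomial and then invokes Theorem~\ref{T1}, whose proof is exactly the residue fixed point index computation you spell out (index $1$ at the super-attracting point, $\sum_{i}\frac{1}{1-\lambda_{i}}=0$ over the remaining fixed points, then taking real parts). Your version merely inlines that argument and handles the multiple-fixed-point case explicitly, which is careful bookkeeping rather than a different method.
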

\begin{proof}
Let $f=h+\overline{g}$ be a polynomial harmonic function, where both the polynomials $h$ and $g$ are of degree greater than or equal to two. As we know $\infty$ is super attracting fixed point for a polynomial function and its multiplier is zero.

Therefore, by Theorem \ref{T1}, both the polynomial $h$ and $g$ has fixed points, say $\mu$
 and $\omega$ such that $h(\mu)=\mu$ and $g(\omega)=\omega$. Then the real part of the multipliers  $Re\left( \lambda=\partial_{z}h(\mu)\right)\geq 1$ and $Re\left( \theta=\partial_{z}g(\mu)\right)\geq 1$. Also as $\mu+\overline{\omega}=h(\mu)+\overline{g(\omega)}$. Thus $\mu+\overline{\omega}$ is a $\mathfrak{h}$- fixed point of $f$. Therefore,   Every polynomial harmonic function $f=h+\overline{g}$, where both the polynomials $h$ and $g$ are of degree greater than or equal to two, has a $\mathfrak{h}$-fixed point, $\zeta=\mu+\overline{\omega}$ such that the real part of its multipliers is greater than or equal to 1
\end{proof}

\begin{example}[Cubic Harmonic Function]
\label{cubic}
Consider $f(z)=z^3+\overline{z^3}$, with $h(z)=z^3$, $g(z)=z^3$. Fixed points of $h$: $z^3 = z$, so $\mu = 0, 1, -1$. Similarly, $\omega = 0, 1, -1$. are fixed points of $g$. Now $\mathfrak{h}$- fixed points are $\zeta = \mu + \omega$:
\begin{itemize}
    \item $\mu = 1$, $\omega=1$, $\zeta = 2$, $h(1) = 1$, $\overline{g(1)} = 1$.
    \item Multipliers: $\lambda = h^{\prime}(1)=3 \cdot 1^{2}= 3$, and $\theta = g'(1) = 3$, $\text{Re}(\lambda) = \text{Re}(\theta) = 3 \geq 1$.
    \item $\mu = -1$, $\omega = -1$, $\zeta = -2$, same multipliers.
    \item $\mu = 1$, $\omega = -1$, $\zeta = 0$, $\lambda = \theta = 3$.
    \item $\mu = 0$, $\omega = 0$, $\zeta = 0$, $\lambda = \theta = 0$ (super-attracting).
    \item $\zeta = 1$, $\zeta = -1$ have mixed multipliers (e.g., $\lambda = 0$, $\theta = 3$).
\end{itemize}

\begin{figure}[h]
    \centering
    \includegraphics[width=0.2\textwidth]{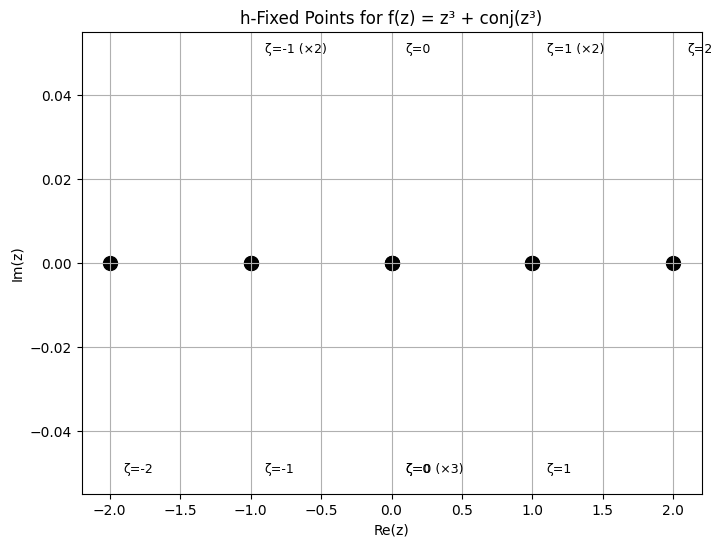}
    \caption{$\mathfrak{h}$-Fixed Points for $f(z) = z^3 + \overline{z^3}$. Points $\zeta = 2, -2$ have multipliers $\lambda = \theta = 3$.}
    \label{fig:cubic}
\end{figure}

\end{example}
\begin{example}
    \cite{The real non-attractive fixed point conjecture and beyond} Consider the rational function $R(z)=\dfrac{1}{z^{n-1}}$ where $n>1$.\\
Fixed points of $R(z)$ are,
\begin{align*}
R(z)&=z\quad\quad \mbox{also} \quad R^{\prime}(z)=(1-n)z^{-n}\\
\frac{1}{z^{n-1}}&=z\\
z^{d}&=1\\
z&=(1)^{\frac{1}{n}}\\
\mbox{$d$-th root of unity}\\
w_{k}&=e^{\frac{2k\pi i}{n}}, \quad\quad k=0,1,2,3\cdots,n-1.
\end{align*} Multiplier of each fixed point $w_{k}$ of this rational function is $(1-n)$ and it will be is negative for $n>2$. Which gives that the real non attractive fixed point conjecture conjecture is false for $\dfrac{1}{z^{n-1}}$, $n>2$.
\end{example}
 
\begin{example}\cite{The real non-attractive fixed point conjecture and beyond} Consider the rational function $f(z)=h(z)+\overline{g(z)}=\dfrac{cz}{z^{2}+z+1}+\overline{\dfrac{cz}{z^{2}+z+1}}$, where $c\neq 0$. Here $0$ is $\mathfrak{h}$-fixed point of $f(z)$.\\
 Also $h^{\prime}(z)=\dfrac{c(1-z^{2})}{(z^2+z+1)^2}$ and $g^{\prime}(z)=\dfrac{c(1-z^{2})}{(z^2+z+1)^2}$  which gives that $h^{\prime}(0)=c$ and $g^{\prime}(0)=c$. Thus the real non attractive fixed point conjecture is true for$f(z)=\dfrac{cz}{z^{2}+z+1}+\overline{\dfrac{cz}{z^{2}+z+1}}$ whenever $Re(c)\geq 1$.

  \end{example}

 \subsection{Beyond the real non-attractive fixed point conjecture}
Now the extreme situation of the real non attractive fixed point conjecture, that is, when the real part of the multipliers of the fixed point is exactly equal to one is discussed.
\subsection*{Multipliers with real part one}
So far the real non attractive fixed point conjecture is discussed which says every rational harmonic function having degree at least two and with a super attracting fixed point or a polynomial harmonic function having degree at least $2$ has a $\mathfrak{h}$-fixed point such that the real part of multiplier of this fixed point is greater than or equal to $1$.

Now the problem is to find out the condition, when will the multipliers of all the finite $\mathfrak{h}$-fixed points of a polynomial harmonic function have real part exactly equal to $1$? This is an supreme situation of the real non attractive fixed point conjecture.
${}$\\ ${}$\\
 	As we know that every quadratic polynomial is conformally conjugate to $z^{2}+c$ for some $c$ and also the multipliers of the fixed points are preserved under conformal conjugacy, so it is enough to consider the quadratic family $z^{2}+c$ \cite{The real non-attractive fixed point conjecture and beyond}.
 	\begin{thm}\label{T3}
 	Let $P(z)=f+\overline{f}=z^{2}+c+\overline{z^{2}+c}$ be the quadratic family, if $c=\frac{1}{4}$ then the polynomial harmonic function $P$ has a single $\mathfrak{h}$-fixed point and its multipliers are $1$. If $c\neq\frac{1}{4}$ then the polynomial harmonic function  $P(z)=z^{2}+c+\overline{z^{2}+c}$ has two simple $\mathfrak{h}$-fixed points. Further, the multipliers of the $\mathfrak{h}$-fixed points have real part equal to $1$ if and only if $c>\frac{1}{4}$. In other words, the multipliers of each $\mathfrak{h}$-fixed point of $P$ has real part equal to $1$ if and only if $c\geq\frac{1}{4}$.
 	
 	\end{thm}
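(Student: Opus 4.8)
The plan is to reduce the whole statement to an elementary analysis of the analytic quadratic $f(z)=z^{2}+c$, since $P=f+\overline{f}$ forces $h=g=f$ and (the inequalities $c\gtrless\tfrac14$ presupposing $c\in\mathbb{R}$) the induced $\mathfrak{h}$-fixed points $\zeta=\mu+\overline{\omega}$ of $P$ are assembled from fixed points of $f$ via $h(\mu)=\mu$, $g(\omega)=\omega$. First I would solve $f(\mu)=\mu$, that is $\mu^{2}-\mu+c=0$, giving $\mu_{\pm}=\tfrac12\bigl(1\pm\sqrt{1-4c}\bigr)$, so that the discriminant $1-4c$ controls everything. Since $f'(z)=2z$, the associated multipliers are $\lambda_{\pm}=2\mu_{\pm}=1\pm\sqrt{1-4c}$, and because $g=f$ the second multiplier at any such point equals the first.

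Next I would separate the three regimes governed by the sign of $1-4c$, using the earlier lemma that a fixed point is multiple exactly when its multiplier is $1$. When $c=\tfrac14$ the discriminant vanishes, $f$ has the single double fixed point $\mu=\tfrac12$ with $f'(\tfrac12)=1$; this yields one $\mathfrak{h}$-fixed point $\zeta=\tfrac12+\overline{\tfrac12}=1$ whose multipliers are $\lambda=\theta=1$. When $c\neq\tfrac14$ the two roots $\mu_{\pm}$ are distinct and $\lambda_{\pm}=1\pm\sqrt{1-4c}\neq1$, so both are simple fixed points of $f$ and hence give simple $\mathfrak{h}$-fixed points, accounting for the two points in the statement.

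The crux is the real-part computation. If $c<\tfrac14$ then $\sqrt{1-4c}$ is a nonzero real number, so $\lambda_{+}=1+\sqrt{1-4c}>1$ while $\lambda_{-}=1-\sqrt{1-4c}<1$, and neither multiplier has real part $1$. If instead $c>\tfrac14$ then $\sqrt{1-4c}=i\sqrt{4c-1}$ is purely imaginary, whence $\lambda_{\pm}=1\pm i\sqrt{4c-1}$ and $\operatorname{Re}(\lambda_{\pm})=1$ exactly (and the same for $\theta_{\pm}$). This gives the biconditional: among the two $\mathfrak{h}$-fixed points the multipliers have real part $1$ precisely when $c>\tfrac14$; adjoining the $c=\tfrac14$ case, where the unique $\mathfrak{h}$-fixed point also has multiplier $1$, yields that every $\mathfrak{h}$-fixed point of $P$ has multiplier real part $1$ iff $c\ge\tfrac14$.

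The step I expect to demand the most care is not the algebra but the bookkeeping of the $\mathfrak{h}$-fixed points: because $h=g=f$, one must make precise which pairs $(\mu,\omega)$ of fixed points of $f$ are to be regarded as distinct $\mathfrak{h}$-fixed points, so that the counts ``single'' and ``two simple'' are justified. The mechanism behind the whole theorem is simply that increasing $c$ through $\tfrac14$ pushes $\sqrt{1-4c}$ off the real axis onto the imaginary axis, which is exactly the transition that lowers $\operatorname{Re}(\lambda)$ onto the critical value $1$.
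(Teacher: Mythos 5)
Your proposal is correct and follows essentially the same route as the paper's own proof: solve $\mu^{2}-\mu+c=0$ to get $\mu_{\pm}=\tfrac12\bigl(1\pm\sqrt{1-4c}\bigr)$ with multipliers $\lambda_{\pm}=1\pm\sqrt{1-4c}$, then read off the three regimes from the sign of the discriminant $1-4c$. The bookkeeping concern you raise (which pairs $(\mu,\omega)$ count as distinct $\mathfrak{h}$-fixed points when $h=g=f$) is left equally implicit in the paper, which simply identifies the $\mathfrak{h}$-fixed points with the fixed points of $f$, so you have not diverged from, nor fallen short of, the published argument.
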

 	\begin{proof}
 	To find the finite $\mathfrak{h}$-fixed points of the polynomial harmonic function $P(z)$ we put it in the following form,
\begin{align*}
f(z)&=z\quad\quad \mbox{Also}\quad f^{\prime}(z)=2z\\
z^{2}+c&=z\\
z^{2}-z+c&=0\\
z&=\frac{1\pm\sqrt{1-4c}}{2}
\end{align*}
So the finite fixed points of $f(z)$ are $\dfrac{1\pm\sqrt{1-4c}}{2}$ with multipliers $1\pm\sqrt{1-4c}$.\\
 If $c=\frac{1}{4}$, then $z=\frac{1}{2}$, i.e., there will be only one fixed point with multiplier $1$, as we have $f^{\prime}(z)=2z$, $f^{\prime}(\frac{1}{2})=1$.\\
 Hence there is only one $\mathfrak{h}$ fixed point if and only if $c=\frac{1}{4}$ and in this case multipliers is $1$.
 ${}$\\ ${}$\\
 If $c\neq\frac{1}{4}$, then there are two distinct finite fixed points of $f$ with multipliers $1+\sqrt{1-4c}$ and $1-\sqrt{1-4c}$ respectively. The real part of these multipliers is exactly equal to $1$ if and only if $1-4c<0$ which is nothing but $c>\frac{1}{4}$.\\
 Hence, the necessary and sufficient condition for the multipliers of each $\mathfrak{h}$-fixed point of $P$  to have real part exactly equal to $1$ is that $c\geq\frac{1}{4}$.
 	\end{proof}
\section{Problems}

\begin{itemize}
    \item \textbf{Transcendental Harmonic Functions}: Does the conjecture hold for $f = h + \overline{g}$, where $h$ or $g$ is transcendental (e.g., $h(z) = e^z$, $g(z) = z^2$)? For $f(z) = e^z + \overline{z^2}$, no finite h-fixed points exist.
 
\end{itemize}
\section{Acknowledgement}
The author acknowledges the inspiration drawn from the work of Rajen Kumar and Tarakanta Nayak \cite{The real non-attractive fixed point conjecture and beyond} in developing the harmonic analogues of the real non-attractive fixed point conjecture.


\begin{thebibliography}{00}

\bibitem{Some questions about complex harmonic functions} Bentez-Babilonia L.E., Felipe, R.,
\textit{Some questions about complex harmonic functions},  Monatshefte
 fur Mathematik, 205(4): 681697 (2024).


\bibitem{Iteration of rational functions: Complex analytic dynamical systems}
	Alan F. Beardon,
	\textit{Iteration of rational functions: Complex analytic dynamical systems}, Graduate texts in mathematics 132, Springer, (1991).

\bibitem{How many real attractive fixed points can a polynomial have?} T. Coelho and B. Kalantari, 
\textit{How many real attractive fixed points can a polynomial have?}, Math. Gaz.,
103 no. 556, (2019) 65–76.

\bibitem{Dynamics of Complex Harmonic Mappings} Datt G and Kaur R, 
\textit{Dynamics of Complex Harmonic Mappings}, arXiv preprint arXiv:2412.14924v3, 2025. https://arxiv.org/abs/2412.14924v3.


\bibitem{Fatou}P. Fatou, Sur les \'{e}quations fonctionnelles, Bull. Soc. Math. France {\bf 47} (1919), 161--271.




\bibitem{Milnor}
John Milnor,
	\textit{Dynamics in One Complex Variable,}.
 Third edition. Annals of Math.
Studies 160. Princeton University Press, (2006).


\bibitem{The real non-attractive fixed point conjecture and beyond}
 Rajen Kumar, Tarakanta Nayak,
	\textit{The real non-attractive fixed point conjecture and beyond},
	Mathematics Newsletter, Vol. 31\quad $\#$2, September (2020).		
\end{thebibliography}
\end{document}